\newtheorem{theorem}{Theorem}[section]
\newtheorem{proposition}[theorem]{Proposition}
\newtheorem{corollary}[theorem]{Corollary}
\newtheorem{question}[theorem]{Question}
\theoremstyle{definition}
\newtheorem{example}[theorem]{Example}
\newtheorem{remark}[theorem]{Remark}
\newcommand{\comment}[1]{}
\newcommand{\lk}{\ell k}
\newcommand{\bdry}{\ensuremath{\partial}}
\newcommand{\nbhd}{\ensuremath{\mathcal{N}}}
\newcommand{\Q}{\ensuremath{\mathbb{Q}}}
\newcommand{\Z}{\ensuremath{\mathbb{Z}}}
\begin{document}
\baselineskip 14pt

\title[Non-characterizing slopes for hyperbolic knots]{Non-characterizing slopes for hyperbolic knots}

\author{Kenneth L. Baker and Kimihiko Motegi}

\address{Department of Mathematics, University of Miami, 
Coral Gables, FL 33146, USA}
\email{k.baker@math.miami.edu}
\address{Department of Mathematics, Nihon University, 
3-25-40 Sakurajosui, Setagaya-ku, 
Tokyo 156--8550, Japan}
\email{motegi@math.chs.nihon-u.ac.jp}

\thanks{The first named author was partially supported by a grant from the Simons Foundation (\#209184 to Kenneth L.\ Baker).  
The second named author has been partially supported by Japan Society for the Promotion of Science, 
Grants--in--Aid for Scientific Research (C), 26400099 and Joint Research Grant of Institute of Natural Sciences at Nihon University for 2015. 
}

\dedicatory{}

\begin{abstract}
A non-trivial slope $r$ on a knot $K$ in $S^3$ is called a characterizing slope 
if whenever the result of $r$--surgery on a knot $K'$ is orientation preservingly homeomorphic to the result of $r$--surgery on $K$, 
then $K'$ is isotopic to $K$.  
Ni and Zhang ask: 
for any hyperbolic knot $K$, 
is a slope $r = p/q$ with $|p| + |q|$ sufficiently large a characterizing slope? 
In this article we answer this question in the negative by 
demonstrating that there is a hyperbolic knot $K$ in $S^3$ which has infinitely many non-characterizing slopes. 
As the simplest known example, 
the hyperbolic knot $8_6$ has no integral characterizing slopes.
\end{abstract}

\maketitle

{
\renewcommand{\thefootnote}{}
\footnotetext{2010 \textit{Mathematics Subject Classification.}
Primary 57M25
\footnotetext{ \textit{Key words and phrases.}
Dehn surgery, characterizing slope}
}
}
\section{Introduction}
\label{section:Introduction}

For a given knot $K$ in the three sphere $S^3$, 
we call $p/q \in \mathbb{Q}$ a \textit{characterizing slope} for $K$ if whenever the result of $p/q$--surgery on a knot $K'$ in $S^3$ is orientation preservingly homeomorphic to the result of $p/q$--surgery on $K$, 
then $K'$ is isotopic to $K$. 
For the trivial knot, 
Gordon \cite{Go} conjectured that every non-trivial slope $p/q \in \mathbb{Q}$ is a characterizing slope. 
Kronheimer, Mrowka, Ozsv\'ath and Szab\'o \cite{KMOS}  
proved this conjecture in the positive using Seiberg-Witten monopoles.  
See \cite{OS_genus} and \cite{OS_rational} for alternative proofs using Heegaard Floer homology. 
Furthermore, Ozsv\'ath and Szab\'o \cite{OS_trefoil_figeight} 
showed that for the trefoil knot and the figure-eight knot, 
every non-trivial slope is a characterizing slope.

On the other hand, it is known that many knots have non-characterizing slopes.  
The first such example was given by Lickorish \cite{Lic}. 
Some torus knots have non-characterizing slopes. 
For instance, $21$--surgeries on $T_{5, 4}$ and $T_{11, 2}$ produce the same oriented $3$--manifold, 
and hence $21$ is a non-characterizing slope for both $T_{5, 4}$ and $T_{11, 2}$ \cite{NZ}. 
However, 
Ni and Zhang \cite{NZ} prove that for a torus knot $T_{r, s}$ with $r > s> 1$, 
a slope $p/q$ is a characterizing slope if $p/q > 30(r^2 -1)(s^2 -1)/67$. 
This suggests that for a given knot $K$, sufficiently large slopes are characterizing ones.  
For hyperbolic knots, Ni and Zhang ask the following: 

\begin{question}[Ni and Zhang \cite{NZ}]
\label{NiZhang}
Let $K$ be a hyperbolic knot. 
Is a slope $r = p/q$ with $|p| + |q|$ sufficiently large a characterizing slope of $K$? 
Equivalently, are there only finitely many non-characterizing slopes of $K$?
\end{question}

The purpose in this article is to answer Question~\ref{NiZhang} in the negative. 

\begin{theorem}
\label{non_charct_slope}
There exists a hyperbolic knot which has infinitely many non-characterizing slopes. 
\end{theorem}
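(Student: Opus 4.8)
The plan is to exhibit a single hyperbolic knot $K$ together with an infinite set of slopes $r_1,r_2,\dots$ and knots $K_1,K_2,\dots$, none isotopic to $K$, so that the result of $r_i$--surgery on $K$ is orientation preservingly homeomorphic to the result of $r_i$--surgery on $K_i$ for every $i$. Each $r_i$ is then a non-characterizing slope of $K$, and we are done.

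To produce the knots $K_i$ I would use the \emph{annulus twist} operation of Osoinach, as exploited by Teragaito and by Abe--Jong--Omae--Takeuchi. Suppose $K$ is disjoint from an embedded annulus $A\subset S^3$ whose boundary is a two--component unlink on which $A$ induces the $0$--framing. The $n$--fold twist of $S^3$ along $A$ is realized by surgeries of slopes $\mp 1/n$ and $\pm 1/n$ on the two boundary circles of $A$; it returns $S^3$ but re--embeds $K$ as a new knot $K^{(n)}$. The classical observation is that whenever $A$ persists into the surgered manifold as an annulus with unlinked, $0$--framed boundary, the same twist is realized by a self--homeomorphism of that manifold, so $r$--surgery on $K$ and on $K^{(n)}$ agree orientation preservingly. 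In the standard constructions this persistence is available only at $r=0$; the crux here is to engineer $K$ and $A$ so that it holds along an entire infinite family $\{r_i\}$. (One can phrase the same idea via a single unknot $c$ disjoint from $K$ with $\lk(c,K)=0$: the point is then that $c$ bounds an embedded disk in the result of $r_i$--surgery on $K$, so $\pm 1/n$--surgery on $c$, taken with respect to the Seifert framing which survives every surgery on $K$, returns that same manifold.)

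Carrying this out, I anticipate the following steps. \textbf{(1)} Write down an explicit $K$ and annulus $A$ --- equivalently, a band/annulus presentation of $K$ --- and verify, directly or with computer assistance, that $K$ is hyperbolic. \textbf{(2)} Produce the infinite family $\{r_i\}$ by showing that for each $r_i$ the two boundary circles of $A$ cobound a planar surface in $S^3\setminus N(K)$ whose remaining boundary consists of parallel slope--$r_i$ curves cancelling homologically, so that it caps off to a disk in the $r_i$--surgery solid torus; equivalently, a Rolfsen/Kirby--calculus computation showing that the twisting surgery leaves the result of $r_i$--surgery on $K$ unchanged. \textbf{(3)} Show the twist is nontrivial --- say that $K^{(1)}$ is not isotopic to $K$ --- for instance by tracking a classical invariant such as the Alexander polynomial, the Seifert genus, or the order of $H_1$ of a cyclic branched cover; then $K^{(1)}$ serves as the witness $K_i$ for every $r_i$ at once, and in any event it suffices that for each $r_i$ some twist $K^{(n_i)}$ is not isotopic to $K$. \textbf{(4)} Assemble: each $r_i$ admits a knot distinct from $K$ with the same $r_i$--surgery, so $\{r_i\}$ is an infinite family of non-characterizing slopes of $K$.

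The main obstacle is step (2) in the presence of step (1): one must build a genuinely hyperbolic $K$ for which the twisting surgery is absorbed not merely at $0$--surgery but along infinitely many slopes, and these two demands pin the example down tightly. Once a workable pair $(K,A)$ is found, checking hyperbolicity and the non-triviality of the twist should be comparatively routine (if computer--assisted) verifications; one expects the simplest such $K$ to be a knot of small crossing number.
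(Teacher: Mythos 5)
Your step (2) is the heart of the matter, and it is not just an engineering difficulty --- for a hyperbolic $K$ the structure you are asking for cannot exist. If the twist along your fixed annulus $A$ (or fixed unknot $c$ with $\lk(c,K)=0$) is absorbed by a self-homeomorphism of $K(r_i)$, then $K^{(n)}(r_i)\cong K(r_i)$ for \emph{every} $n$ at that slope. So if any single twisted knot $K^{(n)}$ is not isotopic to $K$, that one fixed knot would share an orientation-preserving surgery with $K$ at all of the infinitely many slopes $r_i$. But a fixed knot $K'\not\cong K$ cannot share infinitely many surgeries with a hyperbolic $K$: all but finitely many $K(r)$ are hyperbolic, which forces $K'$ to be hyperbolic as well (no surgery on a torus knot is hyperbolic, and for a satellite the companion torus survives all but finitely many fillings), and then by Thurston's hyperbolic Dehn surgery the filling cores are the unique shortest geodesics for $r_i$ large, so the homeomorphisms $K(r_i)\cong K'(r_i)$ carry core to core, give an orientation-preserving homeomorphism of complements, and Gordon--Luecke yields $K'\cong K$. (This finiteness for a fixed pair of knots is precisely the heuristic behind Ni--Zhang's question.) The unknot version is obstructed independently: for the $\pm 1/n$--twist along $c$ to act on $K(r)$ you need $c$ to bound a disk of the correct framing in $K(r)$, i.e.\ $\partial \nbhd(c)$ must compress after the filling, and the finiteness results on boundary-reducing fillings (Culler--Gordon--Luecke--Shalen, Wu) allow only finitely many such slopes unless $c$ already bounds a disk in the complement of $K$ --- in which case twisting along $c$ never changes $K$. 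So with a linking-number-zero twisting circle and a fixed family of witnesses, the construction degenerates: either every $K^{(n)}$ is isotopic to $K$, or you contradict the finiteness above.

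What is missing is the paper's mechanism for letting the witness knot vary with the slope. The paper twists along an unknot $c$ with $\lk(k,c)=\pm 1$ (not $0$) chosen so that $(0,0)$--surgery on $k\cup c$ returns $S^3$. Twisting along such a $c$ changes the surgery slope together with the knot, producing pairs $(k_n,n)$, and the fixed knot is taken to be the surgery \emph{dual} $K$ of $c$ in the new $S^3$: one gets $K(n)\cong k_n(n)$ for all integers $n$, with the witness $k_n$ depending on $n$. When $c$ is not a meridian of $k$, the result of Kouno--Motegi--Shibuya on twist families guarantees $k_n\not\cong K$ for all but finitely many $n$, so all but finitely many integers are non-characterizing slopes for $K$; hyperbolicity is then verified on explicit small examples such as $P(-5,3,-3)$, $9_{42}$, and $8_6$. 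Your annulus-twist idea (Osoinach, Abe et al.) solves the dual problem --- one slope realized by infinitely many knots --- and does not transfer to ``one knot, infinitely many slopes'' without this change of mechanism.
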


To prove Theorem~\ref{non_charct_slope} we first give a general principle to produce 
knots with infinitely many non-characterizing slopes; 
see Theorem~\ref{link_surgery}. 
(In fact, Corollary~\ref{cor:generalsurg} shows this principle produces infinitely many such knots.) 
Then we apply this to present explicit examples. 
Recall that every non-trivial slope is a characterizing slope for a trefoil knot and the figure-eight knot \cite{OS_trefoil_figeight}, 
which are genus one, fibered knots. 
If we drop one of these conditions, we have:

\begin{example}
\label{9_42_P(-3,3,5)}\
\begin{enumerate}
\item
Every integer except possibly $2$ is not a characterizing slope for the knot $9_{42}$ in Rolfsen's table. 
The knot $9_{42}$ is a fibered knot, but it has genus two.

\item
Every integer except possibly $0$ is not a characterizing slope for the pretzel knot $P(-3,3,5)$. 
The pretzel knot $P(-3,3,5)$ is a genus one knot, but it is not fibered. 
\end{enumerate}
\end{example}

\begin{question}
Is $2$ a characterizing slope for $9_{42}$?  Is $0$ a characterizing slope for $P(-3,3,5)$?
\end{question}

A modification of the above example leads us to demonstrate: 

\begin{theorem}
\label{all integers}
There exists a hyperbolic knot for which every integral slope is a non-characterizing slope. 
In particular, 
every integral slope is not a characterizing slope for the 
hyperbolic $8$--crossing knot $8_6$ in Rolfsen's table. 
\end{theorem}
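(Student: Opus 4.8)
The plan is to obtain the knot of the theorem by applying the general principle of Theorem~\ref{link_surgery} to a carefully chosen two-component link $8_6 \cup c$ in $S^3$ with $c$ an unknot, the configuration being a modification of the ones used for $9_{42}$ and $P(-3,3,5)$ in Example~\ref{9_42_P(-3,3,5)}. Recall the shape of those examples: one has a link $K \cup c$ with $c$ unknotted and $\operatorname{lk}(K,c)=0$, so that $-1/n$--surgery on $c$ restores $S^3$ and carries $K$ to a family $\{K_n\}_{n\in\Z}$ with $K_0=K$ and with the $0$--framing (hence every slope $r\in\Q$) transported from $K$ to $K_n$; Theorem~\ref{link_surgery} then yields an orientation-preserving homeomorphism $S^3_r(K_n)\cong S^3_r(K_0)$ for every $n$ and every integer $r$ outside a finite exceptional set---the slopes at which the twisting circle $c$ degenerates inside the surgered manifold. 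For $9_{42}$ this exceptional set is $\{2\}$, and for $P(-3,3,5)$ it is $\{0\}$.

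First I would write down the modified link explicitly. Starting from the configuration producing $9_{42}$ (or $P(-3,3,5)$), I would adjust the twisting band---the number of half-twists in it and the way $c$ threads through it---so as to: (i) keep $c$ unknotted and $\operatorname{lk}(8_6,c)=0$; (ii) arrange that the underlying knot $K_0$ is the knot $8_6$ of Rolfsen's table, which I would confirm by simplifying the explicit diagram; and (iii) retain the annulus data required by the hypotheses of Theorem~\ref{link_surgery}. The point of the modification is to make the exceptional set empty: I would check that for \emph{every} integer $r$, $-1/n$--surgery on the image of $c$ in $S^3_r(8_6)$ recovers $S^3_r(8_6)$---the condition that fails at a single slope in each of the earlier examples---whence $S^3_r(K_n)\cong S^3_r(8_6)$ orientation-preservingly for every $n\in\Z$ and every integer $r$. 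If arranging this with one link proves awkward, a fallback is to exhibit two links $8_6\cup c$ and $8_6\cup c'$ of this type whose exceptional slopes differ, so that together they leave no integer uncovered.

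It then remains to confirm that each integer $r$ is \emph{genuinely} non-characterizing, i.e., that for every $r$ some $K_n$ is not isotopic to $8_6$. By Theorem~\ref{link_surgery} and Corollary~\ref{cor:generalsurg}, once the construction is non-degenerate the $K_n$ are not all the same knot, so it suffices to produce a single $n$ with $K_n$ not isotopic to $8_6$; I would do this by reading off an invariant of $K_1$ from the diagram of the first step---Seifert genus, Alexander polynomial, or (since all knots involved are hyperbolic) hyperbolic volume---and checking that it differs from the corresponding invariant of $8_6$. Finally, $8_6$ is hyperbolic, being neither a torus knot nor a satellite knot, which completes the argument; and the same scheme applied to the knots supplied by Corollary~\ref{cor:generalsurg} furnishes further hyperbolic examples with this property. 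I expect the principal obstacle to be the second step: choosing the modification so that the exceptional set of the general principle is emptied precisely for $8_6$ while preserving $\operatorname{lk}(8_6,c)=0$ and the unknottedness of $c$, since that is the only place where the conclusion strengthens from ``all but one integer'' to ``every integer''.
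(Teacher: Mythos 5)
Your plan rests on a misreading of the general principle, and this creates a genuine gap. In Theorem~\ref{link_surgery} the hypothesis that $(0,0)$--surgery on $k\cup c$ yields $S^3$ forces $|\lk(k,c)|=1$, not $\lk=0$; consequently twisting along $c$ \emph{shifts} the slope ($0\mapsto n$), and the conclusion is $K(n)\cong k_n(n)$ --- one fixed dual knot $K$ whose $n$--surgery agrees with $n$--surgery on a \emph{different} knot $k_n$ for each $n$. It is not, as you assert, a homeomorphism $S^3_r(K_n)\cong S^3_r(K_0)$ for a fixed slope $r$ and all $n$; with a linking-number-zero circle the slope would not shift, but then nothing in the paper (or in your proposal) produces an unknot $c$ in the complement of $8_6$ such that $-1/n$--surgery on its image in $S^3_r(8_6)$ returns $S^3_r(8_6)$ for every integer $r$ --- that property is simply postulated, and it is also not the source of the ``exceptional'' slopes in Example~\ref{9_42_P(-3,3,5)}. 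There, by Corollary~\ref{cor:generalsurg} one has $K_m(r)\cong k_{r-m}(r)$, so the partner knot varies with the slope $r$; the slopes $2$ (for $9_{42}=K_1$) and $0$ (for $P(-3,3,5)=K_0$) are exceptional only because for those $r$ the partner $k_{r-m}$ might coincide with $K_m$ (Alexander polynomials leave open exactly $r-m=\pm m$), not because a surgery on the twisting circle ``degenerates.''

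The paper's actual proof therefore looks quite different from your sketch: it exhibits a \emph{new} link (Figure~\ref{fig:tangledfamily}) satisfying the hypotheses of Theorem~\ref{link_surgery}, in whose twist family the knot $8_6$ appears as $k_1$, and applies Corollary~\ref{cor:generalsurg} to get $k_1(r)\cong K_{r-1}(r)$ for every integer $r$. To conclude that every integer is non-characterizing one must then show $K_m\not\cong k_1$ for \emph{every} integer $m$ --- infinitely many comparisons, which the paper handles by computing the multivariable Alexander polynomial (ruling out all $m\neq\pm1$ via Proposition~\ref{twisting_formula} and Proposition~\ref{prop:dualalexanderpolynomials}) and then Jones polynomials to rule out $K_{\pm1}$. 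Your final step, ``it suffices to produce a single $n$ with $K_n$ not isotopic to $8_6$,'' only makes sense under your incorrect reading in which one slope works for all knots of the family simultaneously; under the correct mechanism the comparison knot changes with the slope, so a single verification is far from enough. As written, the proposal neither constructs the needed link nor supplies the infinite family of distinctions, so it does not prove the theorem.
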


\medskip

Further modifications produce the same result for prime satellite knots and composite knots.

\begin{theorem}
\label{satellite_composite}\
\begin{enumerate}
\item
Given a non-trivial knot $k$, 
there exists a prime satellite knot with $k$ a companion knot for which every integral slope is a non-characterizing slope. 

\item
Given a non-trivial knot $k$, 
there exists a composite knot with $k$ a connected summand for which every integral slope is a non-characterizing slope. 
\end{enumerate}
\end{theorem}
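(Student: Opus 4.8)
The plan is to re-run the construction behind Theorem~\ref{all integers} (an instance of the general principle of Theorem~\ref{link_surgery}) after first tying $k$ into the seed knot. Recall that that construction furnishes a link $\mathcal{K}_0\cup c\subset S^3$ in which $c$ is an unknot with $\lk(\mathcal{K}_0,c)=0$ that is nonetheless essentially linked with $\mathcal{K}_0$, so that the knots $\mathcal{K}_n$ obtained from $\mathcal{K}_0$ by $(-1/n)$--twisting along $c$ form an infinite family of pairwise distinct knots, with the key feature that $(-1/n)$--surgery on $c$ has no effect on any integral surgery on $\mathcal{K}_0$; hence $S^3_r(\mathcal{K}_n)\cong S^3_r(\mathcal{K}_0)$ for every $n$ and every integer $r$, which is what makes each integer a non-characterizing slope for $\mathcal{K}_0$. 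All of the modifications below consist in inserting $k$ into $\mathcal{K}_0$ in a region disjoint from $c$ and from the part of $\mathcal{K}_0$ (and of its surgeries) that this phenomenon uses; the content is to check that the phenomenon survives.

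For part $(2)$, choose a ball $B\subset S^3$ meeting $\mathcal{K}_0$ in a trivial arc and disjoint both from $c$ and from a twisting disk for $c$, and replace the arc $\mathcal{K}_0\cap B$ by a knotted arc forming a copy of $k$; the result is $\mathcal{K}_0\#k$. Twisting along $c$ now produces exactly the family $\{\mathcal{K}_n\#k\}$, since $c$ and its twisting disk miss $B$; these knots are pairwise distinct because connected sum of knots is cancellative, and each is composite with $k$ a summand once we arrange (as one may) that every $\mathcal{K}_n$ is non-trivial. Moreover $c$ remains an unknot in $S^3$ with $\lk(\mathcal{K}_0\#k,c)=0$. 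Because the connected-sum region, hence all the new topology, sits inside $B$, while the isotopies and surgeries responsible for $S^3_r(\mathcal{K}_n)\cong S^3_r(\mathcal{K}_0)$ can be carried out away from $B$, the same argument yields $S^3_r(\mathcal{K}_n\#k)\cong S^3_r(\mathcal{K}_0\#k)$ for all $n$ and all integers $r$. Thus every integer is a non-characterizing slope for the composite knot $\mathcal{K}_0\#k$.

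For part $(1)$ we instead place the seed so that the whole construction is confined to a solid torus. Arrange $\mathcal{K}_0\cup c$ inside an unknotted solid torus $V\subset S^3$ so that $\mathcal{K}_0$ is a non-trivial, geometrically essential pattern in $V$ for which the resulting satellite is prime, so that $c$ lies in a ball inside $V$ linking $\mathcal{K}_0$, and so that the fact that $(-1/n)$--surgery on $c$ does not change $S^3_r(\mathcal{K}_0)$ is already witnessed inside the surgered solid torus $V_r$ obtained from $V$ by $r$--surgery on $\mathcal{K}_0$, for every integer $r$. Now re-embed $V$ as a tubular neighborhood of $k$: the image $\widehat{\mathcal{K}}_0$ is a prime satellite knot with companion $k$, the curve $c$ is still an unknot in $S^3$ with $\lk(\widehat{\mathcal{K}}_0,c)=0$ (it sits in a ball in $V$), and the twist family becomes $\{\widehat{\mathcal{K}}_n\}$, infinitely many of whose members are distinct. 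Since $S^3_r(\widehat{\mathcal{K}}_0)=E(k)\cup_{\partial V}V_r$, where $E(k)$ denotes the exterior of $k$, and the twisting phenomenon is confined to the $V_r$--summand, we again obtain $S^3_r(\widehat{\mathcal{K}}_n)\cong S^3_r(\widehat{\mathcal{K}}_0)$ for all $n$ and all integers $r$; hence every integer is a non-characterizing slope for $\widehat{\mathcal{K}}_0$.

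The main obstacle is the confinement hypothesis in both parts: the heart of the seed construction is a delicate interplay between $c$ and the surgery solid torus of $\mathcal{K}_0$ in $S^3_r(\mathcal{K}_0)$, and one must verify that this interplay can be realized within a ball, respectively within a solid torus neighborhood of $\mathcal{K}_0$ — equivalently that $c$ together with the surgery solid torus can be housed in such a region — so that tying $k$ in elsewhere is invisible to it. (For the seed one may of course choose it expressly with this locality built in, so this is really a constraint on the choice of seed rather than a theorem to be proved.) Once this is arranged, the remaining points — unknottedness and vanishing linking of $c$, primeness and the winding-number bookkeeping guaranteeing that $\widehat{\mathcal{K}}_0$ is a genuine satellite with companion $k$ rather than $k$ itself or a connected sum, non-triviality of the $\mathcal{K}_n$, and distinctness of infinitely many members of each twist family — are routine consequences of the corresponding facts for the seed.
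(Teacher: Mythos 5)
There is a genuine gap, and it lies at the very start: your description of the ``seed'' construction is not what the paper (or Theorem~\ref{link_surgery}) actually provides, and the mechanism you build everything on does not exist there. In the construction behind Theorem~\ref{all integers}, the unknot $c$ has $|\lk(k,c)|=1$ --- this is forced by the hypothesis that $(0,0)$--surgery on $k\cup c$ is $S^3$ (with $\lk=0$ the surgered manifold would have $H_1\cong\Z^2$, so it could not be $S^3$). Consequently twisting along $c$ does \emph{not} leave integral surgeries on a fixed knot unchanged; it shifts the slope, $k_n(m+n)\cong K_m(m+n)$, and the comparison knots are the members $K_m$ of the \emph{dual} twist family, not the original knot itself. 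There is no statement of the form $S^3_r(\mathcal{K}_n)\cong S^3_r(\mathcal{K}_0)$ for a single knot $\mathcal{K}_0$, all $n$, and all integral $r$, and you offer no independent construction of a linking-number-zero circle with that (very strong) property. Since your entire strategy --- ``confine the phenomenon to a ball, respectively to a solid torus, and tie $k$ in elsewhere'' --- is an attempt to localize this nonexistent phenomenon, both parts of the argument collapse: in particular your identity $S^3_r(\mathcal{K}_n\#k)\cong S^3_r(\mathcal{K}_0\#k)$ and its satellite analogue are unsupported.

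Two further points would remain even if the mechanism were repaired. First, in the paper the role of ``tying in $k$'' is played by forming $k=k'\,\sharp\,k''$ away from $c$, and the crucial outcome is that the new topology is \emph{not} invisible: after the light-bulb moves the dual knot $K$ becomes a satellite with companion $k''$, and it is the pairing of the composite family $\{k_n\}=\{k'_n\,\sharp\,k''\}$ with the prime satellite family $\{K_m\}$ (which are disjoint families precisely because one consists of composite knots and the other of prime knots) that makes \emph{every} integral slope non-characterizing for both. Second, primeness of the satellite knots is not something one can simply ``arrange'': in the paper it is proved by a genuine argument, comparing the JSJ/torus decompositions of $K_m(n+m)=E(k'_n)\cup E(k'')$ for $n=0,1$ (where $E(k'_0)$ and $E(k'_1)$ are the distinct hyperbolic exteriors of $P(-5,3,-3)$ and $9_{42}$) and deriving a contradiction from uniqueness of the decomposition if $K_m$ were composite. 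Your proposal treats both of these substantive steps as routine bookkeeping, which they are not.
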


\medskip

Among known examples, 
 the knot $8_6$ is the simplest knot (with respect to crossing numbers) which has infinitely many non-characterizing slopes. 
So we would like to ask: 

\begin{question}
\label{ques:smallcrossing}
Are there any knots of crossing number less than $8$ that have infinitely many non-characterizing slopes?
\end{question}

\bigskip

\section{Non-characterizing slopes and twist families of surgeries}
\label{twist}

\subsection{General construction}
\label{general}

 In this subsection
we establish the following general principle.

\begin{theorem}
\label{link_surgery}
Let $k \cup c$ be a two-component link in $S^3$ such that $c$ is unknotted.
Suppose that $(0, 0)$--surgery on $k \cup c$ results in $S^3$. 
Let $K$ be the knot in $S^3$ which is surgery dual to $c$, the image of $c$,  
in the surgered $S^3$, and let $k_n$ be the knot obtained from $k$ by twisting $n$ times along $c$.
Then $K(n) \cong k_n(n)$ for all integers $n$.

Moreover, 
if $c$ is not a meridian of $k$, 
then $K \not\cong k_n$ for all but finitely many integers $n$. 
\end{theorem}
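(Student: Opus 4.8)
The plan is to compare knot exteriors and track how they vary through the twist family. Write $M = S^3 \setminus \Int\,\nbhd(k\cup c)$ for the link exterior, with torus boundary components $T_k = \bdry\,\nbhd(k)$, $T_c = \bdry\,\nbhd(c)$ and the usual meridian--longitude pairs $\mu_k,\lambda_k$ and $\mu_c,\lambda_c$. Since $K$ is the surgery dual to $c$ in the result of $(0,0)$--surgery on $k\cup c$, its exterior is the Dehn filling $M(T_k;\lambda_k)$, whose remaining boundary $T_c$ is $\bdry\,\nbhd(K)$; and since $k_n$ is the image of $k$ after twisting $n$ times along $c$ — i.e.\ after $-1/n$--surgery on $c$ — the exterior of $k_n$ is $M(T_c;\sigma_n)$, where $\sigma_n = \mu_c - n\lambda_c$, with remaining boundary $T_k = \bdry\,\nbhd(k_n)$. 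Because knots in $S^3$ are determined by their exteriors, it suffices to show that $M(T_c;\sigma_n)$ is homeomorphic to the fixed manifold $Y := M(T_k;\lambda_k)$ for only finitely many $n$. I record for later that $\Delta(\sigma_n,\lambda_c)=1$ and $\Delta(\sigma_n,\sigma_m)=|n-m|$, so the $\sigma_n$ eventually leave every finite set of slopes on $T_c$; and that, since $(0,0)$--surgery on $k\cup c$ yields a homology sphere, the linking matrix $\mat{0}{\ell}{\ell}{0}$ is unimodular, so $\ell = \mathrm{lk}(k,c) = \pm1$. In particular $k\cup c$ is non--split, $M$ is irreducible, and $T_k,T_c$ are incompressible in $M$.

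Next I would split on the geometry of $M$. If $M$ is hyperbolic, then by Thurston's hyperbolic Dehn surgery theorem $M(T_c;\sigma_n)$ is hyperbolic for all but finitely many $n$; as the length of $\sigma_n$ on the $T_c$--cusp grows linearly in $|n|$, the volumes $\vol\,M(T_c;\sigma_n)$ increase strictly monotonically toward $\vol M$ as $n\to+\infty$ and as $n\to-\infty$, so they take any prescribed value for only finitely many $n$, and since $M(T_c;\sigma_n)\cong Y$ forces $Y$ hyperbolic with $\vol Y$ equal to that value (or forces $M(T_c;\sigma_n)$ to be one of the finitely many non--hyperbolic fillings), we are done. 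If $M$ is not hyperbolic then, being irreducible and boundary--irreducible, it is Seifert fibered or toroidal. The crucial point is that $M$ is not one of the ``exceptional'' manifolds for which a single torus boundary component can be filled to give a fixed manifold infinitely often — $T^2\times I$, a solid torus, or a cable space — because each of these would force $k\cup c$, via the classification of Seifert links in $S^3$ together with $c$ unknotted and $\mathrm{lk}(k,c)=\pm1$, to be the Hopf link, so that $c$ is a meridian of $k$, contrary to hypothesis. Granting this, the structure theory of Dehn fillings gives $M(T_c;\sigma_n)\cong Y$ for only finitely many $n$. Concretely, through the JSJ decomposition: let $P$ be the piece containing $T_c$; for all but finitely many $n$ the slope $\sigma_n$ avoids the slopes along which filling creates an essential sphere, compresses a JSJ torus, or collapses $P$, and then either $\vol\,P(T_c;\sigma_n)$ (if $P$ is hyperbolic) converges strictly monotonically to $\vol P$, or a Seifert invariant of $P(T_c;\sigma_n)$ (if $P$ is Seifert fibered) is a non--constant affine function of $n$ — in either case the JSJ decomposition of $M(T_c;\sigma_n)$ changes with $n$. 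If $P(T_c;\sigma_n)$ instead collapses to a solid torus, this replaces $T_c$ by a varying filling slope on the adjacent JSJ torus and one recurses on the smaller piece; the process terminates. Hence $k_n\cong K$ for only finitely many $n$.

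I expect the genuine work to lie entirely in the non--hyperbolic case: (i) the classification input showing that a Seifert fibered or ``exceptional'' $M$ forces $c$ to be a meridian of $k$, which uses the description of Seifert links and of small link exteriors in $S^3$ with an unknotted component; (ii) bounding the finitely many twist parameters $n$ at which filling $T_c$ degrades the JSJ structure, a Dehn-filling finiteness of Gordon--Luecke/Wu type; and (iii) organizing the possible recursion into smaller JSJ pieces and the bookkeeping of Seifert invariants. A cleaner route, if available in the literature, is to quote a theorem on twist families of knots asserting directly that when $c$ is not a meridian of $k$ the family $\{k_n\}$ repeats any given knot type only finitely often; the argument then collapses to the first paragraph.
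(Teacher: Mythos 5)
There is a genuine gap: your proposal never proves the first (and principal) assertion of the theorem, namely $K(n) \cong k_n(n)$ for \emph{all} integers $n$. After identifying the exteriors of $K$ and of $k_n$ as fillings of the link exterior $M$, you declare that ``it suffices to show that $M(T_c;\sigma_n)\cong Y$ for only finitely many $n$''---but that reduction only addresses the ``Moreover'' clause ($K\not\cong k_n$ for all but finitely many $n$). The heart of the theorem, and of the paper's proof, is the framing bookkeeping needed to see that the two closed manifolds obtained by filling \emph{both} boundary tori of $M$ agree with the stated surgery coefficients: one must check that $|\lk(k,c)|=1$ (from the homology of the $(0,0)$--surgery being $S^3$), that under the $(-1/n)$--twist the $0$--framing of $k$ becomes the $n$--framing of $k_n$, and---crucially---that the preferred meridian--longitude pair of the dual knot $K$ in the surgered $S^3$ is $(\lambda_c,-\mu_c)$, because $\mu_c$ is null-homologous in $V(k;0)$ (it bounds a Seifert surface for $K$), so that the filling slope $-\mu_c+n\lambda_c$ on $T_c$ is precisely the slope $n$ on $K$ rather than, say, $-n$ or some other slope in $K$'s framing. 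None of this computation appears in your write-up, so the surgery equivalence $K(n)\cong k_n(n)$---which is what produces the non-characterizing slopes---is left unestablished.

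For the finiteness clause, the paper's route is much shorter than yours: since $\lk(k,c)\neq 0$ and $c$ is not a meridian of $k$, every disk bounded by $c$ meets $k$ at least twice (wrapping number $\geq 2$), and then a theorem of Kouno--Motegi--Shibuya on twist families immediately gives that $k_n\cong K$ for only finitely many $n$---exactly the ``cleaner route'' you mention at the end. Your direct argument is fine in outline for hyperbolic $M$ (volumes of the fillings converge to $\vol M$ from below, so a fixed homeomorphism type occurs finitely often), but the non-hyperbolic case is only a heuristic: the claimed trichotomy of exceptional pieces, the assertion that Seifert invariants of the filled JSJ piece must vary, and the terminating recursion when a piece collapses to a solid torus are all stated without justification, and making them precise is essentially reproving the cited twist-family theorem.
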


\begin{proof}
Since $(0,0)$--surgery on $k \cup c$ is $S^3$, a homology calculation shows that $|\lk(k,c)| = 1$. 
Performing $(-1/n)$--surgery along $c$ takes the knot $k$ with the surgery slope $0$ to a knot $k_n$ with a surgery slope 
$n = 0 + n (\lk(k,c))^2$, 
i.e. $n$--twist along $c$ converts a knot-slope pair $(k, 0)$ into another knot-slope pair $(k_n, n)$; 
thus we obtain a twist family of knot-slope pairs $\{ (k_n, n) \}$. 
Let $V$ be the solid torus $S^3 - \nbhd(c)$ which contains $k$ in its interior. 
Observe that $V(k; 0) \cong V(k_n; n)$ for all $n$. 

Let $(\mu_c, \lambda_c)$ be a preferred meridian-longitude pair of $c \subset S^3$, 
oriented with the right-handed orientation (so that if $c$ is oriented in the same direction as $\lambda_c$ in $\nbhd(c)$, 
then $\lk(\mu_c,c)=1$). 
Note that $\lambda_c$ represents the $0$--slope on $N(c)$ and $\lambda_c$ bounds a meridian disk of the solid torus $V$. 
Let $c_n$ be the surgery dual to the $(-1/n)$--surgery on $c$  (i.e.\ a core of the filled solid torus) with meridian $\mu_n$, 
the $(-1/n)$--surgery slope of $c$ in $\bdry V$.  
These curves $\mu_n$ are each longitudes of $V$ and satisfy 
$[\mu_n] = -[\mu_c] + n [\lambda_c] \in H_1(\bdry V)$; 
$[\mu_0] = -[\mu_c]$. 

Since $k$ wraps algebraically once in $V$, 
a preferred longitude of $k \subset V \subset S^3$ is homologous to $\mu_c$ in $V - \nbhd(k)$. 
Hence $\mu_c$ is null-homologous in $V(k; 0)$. 

Let $K$ be the surgery dual to $c$ with respect to $\lambda_c$--surgery. 
(Adapting the above notation $K$ may be regarded as $c_{\infty}$.) 
Since $(0, 0)$--surgery on $k \cup c$ results in $S^3$, 
$K$ is a knot in this surgered $S^3$ with exterior $S^3 - \nbhd(K) = V(k; 0)$ and meridian $\lambda_c$. 
Because $\mu_c$ is null-homologous in $V(k; 0)$, 
$\mu_c$ is the boundary of a Seifert surface for $K$. 

With right-handed orientation, 
a preferred meridian-longitude pair for $K$ in $S^3$ is  given by 
$(\lambda_c, -\mu_c)$.   
Thus 
$[\mu_n] = -[\mu_c] + n [\lambda_c] = n [\lambda_c] + (-[\mu_c])$, 
which corresponds to a slope $n$ with respect to the preferred meridian-longitude pair $(\lambda_c, -\mu_c)$. 
Therefore $k_n(n) = K(n)$ for all integers $n$. 

If $c$ is not a meridian of $k$, 
since $\lk(k, c) \ne 0$, 
any disk bounded by $c$ intersects $k$ more than once. 
Then it follows from \cite{KMS} that there are only finitely many $n$ such that $k_n$ is isotopic to $K$. 
\end{proof}

\medskip

\begin{remark}
\label{GM}
Gompf-Miyazaki had previously utilized the mirror of the knot $K$ associated to $k$ as described in Theorem~\ref{link_surgery} for a satellite construction of ribbon knots that generalizes the connected sum of a knot and its mirror \cite{GM}.
\end{remark}

\medskip

Let $k \cup c$ be a link as in Theorem~\ref{link_surgery} where $c$ is an unknot such that the result of $(0,0)$--surgery on $k \cup c$ is $S^3$ with surgery dual link $C \cup K$ where $K$ is dual to $c$ and $C$ is dual to $k$. 
After $0$--surgery on $c$, 
$k$ becomes some knot in $c(0) = S^1 \times S^2$. 
Since a non-trivial surgery (corresponding to the $0$--surgery) on $k \subset S^1 \times S^2$ yields $S^3$, 
due to Gabai \cite[Corollary~8.3]{gabai-propertyR}, 
it turns out that $k (\subset S^1 \times S^2)$ is an $S^1$ fiber in some product structure of $S^1 \times S^2$. 
Since the product structure of $S^1 \times S^2$ is unique up to isotopy, 
$k$ is ambient isotopic to an $S^1$ fiber in the original product structure of $c(0) = S^1 \times S^2$. 
Thus the surgery dual $C$ to $k$ in $(k \cup c)(0, 0) = S^3$ is an unknot while 
the surgery dual $K$ to $c$ is not necessarily unknotted in this $S^3$. 

Further, if $c$ is a meridian of $k$, 
then after we straighten $k$ in $c(0) = S^1 \times S^2$, 
the image $K$ of $c$ in $(k \cup c)(0, 0) = S^1 \times S^2$ 
intersects $\{ x \} \times S^2$ once for some $x \in S^1$. 
This implies that the dual $C$ to $k$ is a meridian of $K$ in $S^3$; 
see \cite[p.119]{GM}. 
Conversely, if $C$ is a meridian of $K$, 
then $c$ is a meridian of $k$. 
Thus if $c$ is not a meridian of $k$, 
then $C$ is not a meridian of $K$ neither. 

\medskip

In the proof of Theorem~\ref{link_surgery} we observe that 
$(k \cup c)(0, -\frac{1}{n}) \cong (C \cup K)(\frac{1}{0}, n)$, 
$(k \cup c)(0, -\frac{1}{n}) \cong k_n(n)$ and 
$(C \cup K)(\frac{1}{0}, n) \cong K(n)$. 
Starting with $m$--surgery instead of $0$--surgery on $k$, 
the argument in the proof of Theorem~\ref{link_surgery} 
leads us the following generalization. 
In what follows, 
$K_m$ denotes the knot obtained from $K$ by twisting $m$ times along $C$.    

\begin{corollary}
\label{cor:generalsurg}
Let $k \cup c$ be a link as in Theorem~\ref{link_surgery} with surgery dual link $C \cup K$ where $K$ is dual to $c$ and $C$ is dual to $k$. 
Then $K_m(n+m) \cong k_n(m+n)$ for any integers $m, n$.

Moreover, if $c$ is not a meridian of $k$, then each family $\{K_m\}$ and $\{k_n\}$ contains infinitely many distinct knots, each of which has only finitely many integral characterizing slopes. 
\end{corollary}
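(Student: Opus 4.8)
The plan is to run the argument of Theorem~\ref{link_surgery} ``from both ends'' of the surgery dual link $C\cup K$, and then feed the resulting family of homeomorphisms into \cite{KMS}. For the first assertion, recall from the discussion preceding the statement that $(0,0)$--surgery on $C\cup K$ returns the original $S^3$ containing $k\cup c$, with $C$ unknotted, with $k$ the surgery dual of $C$, and with $c$ the surgery dual of $K$; thus $C\cup K$ also satisfies the hypotheses of Theorem~\ref{link_surgery}, now with $C$ in the role of the unknotted component. I would compute $(k\cup c)(m,-\frac{1}{n})$ in two ways. On the $k$--side, $(-\frac{1}{n})$--surgery on the unknot $c$ is an $n$--twist along $c$, which, because $|\lk(k,c)|=1$, carries the knot--slope pair $(k,m)$ to $(k_n,m+n)$; hence $(k\cup c)(m,-\frac{1}{n})\cong k_n(m+n)$. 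On the dual side, starting from $(k\cup c)(0,0)=S^3$ and altering the two filling slopes, changing the $k$--slope from $0$ to $m$ amounts to an $m$--twist along $C$, while changing the $c$--slope from $0$ to $-\frac{1}{n}$ amounts to $n$--surgery on $K$ --- this is exactly the identification $(k\cup c)(0,-\frac{1}{n})\cong(C\cup K)(\frac{1}{0},n)\cong K(n)$ from the proof of Theorem~\ref{link_surgery}, now with the $C$--filling made non-trivial. Tracking the slope on $K$ through the $m$--twist along $C$ (again $|\lk(C,K)|=1$) gives $(k\cup c)(m,-\frac{1}{n})\cong K_m(n+m)$. Comparing the two computations yields $K_m(n+m)\cong k_n(m+n)$, and since every step is a Rolfsen twist or a regluing of solid tori by the identity, the homeomorphism is orientation preserving.

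For the second assertion, note that since $c$ is not a meridian of $k$, the discussion preceding the statement shows $C$ is not a meridian of $K$ either; equivalently, some disk bounded by $c$ meets $k$ more than once and some disk bounded by $C$ meets $K$ more than once. Hence \cite{KMS} applies to both twist families, giving the uniform statement that for every knot type $\kappa$ only finitely many $n$ have $k_n\cong\kappa$ and only finitely many $m$ have $K_m\cong\kappa$; letting $\kappa$ range over all knot types and using that $\Z$ is infinite, each of $\{k_n\}$ and $\{K_m\}$ realizes infinitely many knot types. To bound the characterizing slopes, fix any integer $n$: by the first assertion $K_m(n+m)\cong k_n(n+m)$ orientation preservingly for every $m\in\Z$, so whenever $K_m\not\cong k_n$ the integer $n+m$ is a non-characterizing slope of $k_n$; applying \cite{KMS} to $\{K_m\}$ with $\kappa=k_n$ shows this happens for all but finitely many $m$, hence for all but finitely many integers $n+m$. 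Thus $k_n$ has only finitely many integral characterizing slopes, and the symmetric argument --- fix $m$, vary $n$, apply \cite{KMS} to $\{k_n\}$ with $\kappa=K_m$ --- does the same for every $K_m$.

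The genuinely delicate step is the two-ended computation in the first paragraph: one must pin down the orientation and slope conventions so that the twisting parameter and the surgery coefficient come out to the same integer $n+m$ on both sides. This is the same bookkeeping already carried out in the proof of Theorem~\ref{link_surgery}; once it is in hand, everything else is a formal consequence of that identity together with \cite{KMS}.
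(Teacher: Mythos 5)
Your proposal is correct and follows essentially the same route as the paper: the same chain $K_m(n+m)\cong (C\cup K)(-\tfrac{1}{m},n)\cong (k\cup c)(m,-\tfrac{1}{n})\cong k_n(m+n)$, justified by the dual meridian--longitude correspondence $(\mu_C,\lambda_C)=(\lambda_k,-\mu_k)$ and $(\mu_K,\lambda_K)=(\lambda_c,-\mu_c)$, followed by the same application of \cite{KMS} (each knot type occurs only finitely often in each twist family, since the wrapping numbers are at least $2$) to conclude both the infinitude of knot types and the finiteness of integral characterizing slopes. One small wording fix: ``$c$ is not a meridian of $k$'' is equivalent (given linking number $\pm 1$) to \emph{every} disk bounded by $c$ meeting $k$ more than once, not just some disk; this is the wrapping-number hypothesis that \cite{KMS} actually requires, and it is what your argument implicitly uses.
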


\begin{proof}
Observe that $S^3- \nbhd(k \cup c) = S^3 - \nbhd(C \cup K)$ and the meridian-longitude pairs $(\mu_k, \lambda_k)$ for $k$ and $(\mu_c, \lambda_c)$ for $c$  become meridian-longitude pairs $(\lambda_k, -\mu_k)$ for $C$ and $(\lambda_c, -\mu_c)$ for $K$.  
The latter correspondence was shown in the proof of Theorem~\ref{link_surgery}. 
For the former correspondence, 
by definition, 
$\lambda_k$ becomes a meridian of $C$, the surgery dual to $k$. 
Observe also that $\mu_k$ is homologous to $\lambda_c$, which bounds a disk of the filled solid torus 
after $0$--surgery on $c$. 
Thus $\mu_k$ is a preferred longitude of $C$. 
Now the orientation convention gives the desired result. 

Then we have the following surgery relation
\[
K_m(n+m) \cong (C \cup K)(-\tfrac{1}{m},n)
 \cong (k \cup c)(m,-\tfrac{1}{n}) 
 \cong k_n(m+n)
\]
as claimed.

If $c$ is not a meridian of $k$, then $C$ is not a meridian of $K$.  Since $\lk(k,c) \neq 0$ and $\lk(K,C)\neq 0$, 
the wrapping numbers of $k$ about $c$ and $K$ about $C$ are at least $2$. 
Then \cite[Theorem 3.2]{KMS} implies that each twist family of knots $\{k_n\}$ and $\{K_m\}$ partitions into infinitely many distinct knot types containing finitely many members.  
Therefore, since $K_m(n+m) \cong k_n(m+n)$, 
each knot in these two families has only finitely many characterizing slopes.
\end{proof}

\medskip

\begin{remark}
\label{dual link}
Let $k \cup c$ be a link in $S^3$ such that $c$ is unknotted and $(0, 0)$--surgery on $k \cup c$ yields 
$S^3$ with surgery dual link $C \cup K$. 
Then as observed above, $C$ is also unknotted and $(0, 0)$--surgery on $C \cup K$ yields $S^3$ with surgery dual $k \cup c$. 
In particular, $|\lk(K, C)| = 1$. 
\end{remark}

\subsection{Multivariable Alexander polynomials}
We take $\Delta_{A \cup B}(x,y)$ to be the {\em symmetrized} multivariable Alexander polynomial of the {\em oriented} two-component link $A \cup B$ where $x$ corresponds to the oriented meridian $\mu_A$ of $A$ and $y$ corresponds to the oriented meridian $\mu_B$ of $B$.     Due to the symmetrization, 
\[\Delta_{A \cup B}(x,y) = \Delta_{A \cup B}(x^{-1},y^{-1})= \Delta_{-A \cup -B}(x,y).\]
However, in general, $\Delta_{A \cup B}(x,y) \neq \Delta_{A \cup -B}(x,y)$.

\begin{proposition}
\label{prop:dualalexanderpolynomials}
Assume $k \cup c$ is an oriented two-component link with $\lk(k,c) =1$ such that $c$ is an unknot.  
Further assume $(0,0)$--surgery on $k \cup c$ results in $S^3$ with surgery dual $C \cup K$ 
where $K$ is dual to $c$ and $C$ is dual to $k$, oriented so that $\lk(K,C)=1$. 
Then $\Delta_{K \cup C}(x, y) = \Delta_{k \cup c}(x, y^{-1})$, 
equivalently $\Delta_{k \cup c}(x,y) = \Delta_{K \cup C}(x,y^{-1})$.
\end{proposition}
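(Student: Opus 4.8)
The plan is to relate the two links via the common exterior $X = S^3 - \nbhd(k \cup c) = S^3 - \nbhd(C \cup K)$ and to use the Torres formula together with the surgery descriptions to pin down the multivariable Alexander polynomial after the change of coordinates. First I would set up the homology of $X$: since $\lk(k,c) = 1$, the group $H_1(X) \cong \Z^2$ is generated by $\mu_k, \mu_c$, and the correspondence recorded in the proof of Theorem~\ref{link_surgery} and in Corollary~\ref{cor:generalsurg} says that in terms of the other link the meridian-longitude pairs become $(\lambda_k, -\mu_k)$ for $C$ and $(\lambda_c, -\mu_c)$ for $K$. In particular the meridian $\mu_C$ of $C$ equals $\lambda_k$ and the meridian $\mu_K$ of $K$ equals $\lambda_c$ in $H_1(X)$. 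The key computation is to express $\lambda_k$ and $\lambda_c$ in the basis $\{\mu_k, \mu_c\}$: because $\lk(k,c)=1$, a Seifert surface argument gives $[\lambda_k] = [\mu_c]$ and $[\lambda_c] = [\mu_k]$ in $H_1(X)$ (each longitude is homologous to the meridian of the other component). Hence if $x \leftrightarrow \mu_k$, $y \leftrightarrow \mu_c$ is the change of variables for $k \cup c$, then for $C \cup K$ the variable attached to $\mu_C = \lambda_k$ is $y$ and the variable attached to $\mu_K = \lambda_c$ is $x$ — but one must track orientations, and this is where the $y \mapsto y^{-1}$ enters.

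The orientation bookkeeping is the crux. The Alexander polynomial $\Delta_{C \cup K}$ is, by the stated convention, expressed with $x'$ the oriented meridian $\mu_C$ and $y'$ the oriented meridian $\mu_K$, where the orientations of $C$ and $K$ are fixed by requiring $\lk(K,C)=1$. Meanwhile the correspondence $\mu_C = \lambda_k$, $\mu_K = \lambda_c$ from the right-handed orientation convention carries a sign: one has $[\mu_K] = [\lambda_c]$ with the orientation of $K$ chosen so that its preferred meridian-longitude pair is $(\lambda_c, -\mu_c)$, i.e. the longitude of $K$ is $-\mu_c$, which forces the orientation of $K$ to be opposite to that induced on $\lambda_c$ from $c$. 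Thus substituting back, the variable $y' = \mu_K$ corresponds to $y^{-1}$ rather than $y$, while $x' = \mu_C = \lambda_k$ corresponds to $x$ (here the sign works out because the $C$–orientation matches). Since $\Delta$ depends only on the ideal of the Alexander module over the group ring of $H_1$, and $X$ is the same space, we get $\Delta_{K \cup C}(x,y) = \Delta_{k \cup c}(x, y^{-1})$ up to units; the symmetrization eliminates the unit ambiguity. The equivalent form $\Delta_{k \cup c}(x,y) = \Delta_{K \cup C}(x, y^{-1})$ follows by replacing $y$ with $y^{-1}$ and using Remark~\ref{dual link} (the construction is symmetric in the two links).

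Alternatively, and perhaps more cleanly, I would invoke the Torres conditions or a direct presentation-matrix argument: a Wirtinger-type presentation of $\pi_1(X)$ and its abelianization are intrinsic to $X$, so the Alexander module of $X$ as a module over $\Z[H_1(X)]$ is a fixed object; the only thing that changes between "$k \cup c$" and "$C \cup K$" is the identification of $H_1(X) \cong \Z^2$ with the Laurent variables, which is exactly the linear-algebra change of basis computed above. Then the proposition is the statement that this change of basis is $(x,y) \mapsto (x, y^{-1})$.

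The step I expect to be the main obstacle is nailing down the sign/orientation in the substitution — specifically, verifying that it is $y \mapsto y^{-1}$ (and not $x \mapsto x^{-1}$, or both, or neither) that implements the duality. This requires carefully combining the right-handed orientation convention for meridian-longitude pairs used throughout Section~\ref{twist}, the chosen orientations making $\lk(k,c)=\lk(K,C)=1$, and the fact that $\lambda_k$ bounds in the complement while $\mu_c$ does not become null-homologous until after the $0$–surgery. I would do this by drawing the genus-one Seifert surface picture for the linking-number-one situation and reading off the homology classes of all four curves $\mu_k,\lambda_k,\mu_c,\lambda_c$ explicitly, then feeding those into the coordinate change. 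Once the orientations are settled, the rest is the standard invariance of the Alexander polynomial under homeomorphism of the exterior.
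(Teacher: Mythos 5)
Your strategy is the same as the paper's: work in the common exterior $X = S^3-\nbhd(k\cup c)=S^3-\nbhd(K\cup C)$, use once-punctured Seifert surfaces to get $[\lambda_k]=[\mu_c]$ and $[\lambda_c]=[\mu_k]$ in $H_1(X)$, record the correspondences $(\mu_K,\lambda_K)=(\lambda_c,-\mu_c)$ and $(\mu_C,\lambda_C)=(\lambda_k,-\mu_k)$, and then note that the Alexander module is intrinsic to $X$, so only the identification of the meridian classes with the Laurent variables changes. All of that matches the paper. However, the decisive orientation step --- which you yourself single out as the main obstacle --- is not carried out correctly, and since the entire content of the proposition is that one sign, this is a genuine gap. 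With the Seifert-surface-induced orientations one has $[\mu_K]=[\lambda_c]=[\mu_k]$ and $[\mu_C]=[\lambda_k]=[\mu_c]$, i.e.\ \emph{no} variable is inverted at this stage; the point the paper makes (and your write-up omits) is that these orientations give $\lk(K,C)=-1$, because $[\lambda_C]=-[\mu_k]=-[\mu_K]$. The hypothesis $\lk(K,C)=+1$ then forces reversing the orientation of exactly one dual component, say $C$, and it is this reversal that produces $[\mu_C]=-[\mu_c]$, hence the substitution $y\mapsto y^{-1}$ (reversing $K$ instead gives $x\mapsto x^{-1}$, which is equivalent after symmetrization).

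Your account of this step is internally inconsistent: in the first paragraph you correctly attach $x$ to $\mu_K$ and $y$ to $\mu_C$, but in the next paragraph you assert that ``$y'=\mu_K$ corresponds to $y^{-1}$ while $x'=\mu_C$ corresponds to $x$,'' which swaps the two variables and places the inversion on the wrong meridian (any correct identification must have $[\mu_K]=\pm[\mu_k]$, i.e.\ $x^{\pm1}$, and $[\mu_C]=\pm[\mu_c]$, i.e.\ $y^{\pm1}$). Moreover you attribute the inversion to the minus sign in $\lambda_K=-\mu_c$; that sign is automatic from the duality and is already present for the orientations under which no variable is inverted at all, so it cannot be the source of the $y^{-1}$. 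The missing computation is precisely the verification that $\lk(K,C)=-1$ for the induced orientations, followed by tracking how re-orienting one component negates its meridian class; without it the claimed formula $\Delta_{K\cup C}(x,y)=\Delta_{k\cup c}(x,y^{-1})$ is asserted rather than derived. The remaining ingredients (intrinsic Alexander module over $\Z[H_1(X)]$, symmetrization removing the unit ambiguity, the symmetric form via Remark~\ref{dual link}) are fine and agree with the paper.
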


\begin{proof}
Let us write $\mu_J$ and $\lambda_J$ for the  meridian and preferred longitude of an oriented knot $J$ in $S^3$ which we view as oriented curves in $\bdry \nbhd(J)$ 
such that $\lk(J, \mu_J) = 1$ and $\lambda_J$ is homologous to $J$. 
Let $X = S^3-\nbhd(k \cup c)$ be the exterior of the link $k \cup c$.
Since the linking number of $k \cup c$ is $1$, in $H_1(X; \Z)$ we have that $[\mu_k] = [\lambda_c]$ and $[\mu_c] = [\lambda_k]$.
Furthermore these homologies are realized by oriented Seifert surfaces $\Sigma_c$ and $\Sigma_k$ that are each punctured once by $k$ and $c$ respectively.  
In particular, restricting to $X$, $\bdry \Sigma_c = \lambda_c - \mu_k$ and $\bdry \Sigma_k = \lambda_k -\mu_c$.

Since $K$ is the surgery dual to $c$ with respect to $0$--surgery on $c$ and $C$ is the surgery dual to $k$ with respect to $0$--surgery on $k$, 
$X = S^3 - \nbhd(K \cup C)$.    
Upon surgery, the punctured Seifert surfaces $\Sigma_k$ and $\Sigma_c$ cap off to oriented Seifert surfaces 
$\Sigma_K$ and $\Sigma_C$ respectively for $K$ and $C$.   
Using these surfaces to orient $K$ and $C$ and thus their meridians and longitudes, 
we obtain that $(\mu_K, \lambda_K) = (\lambda_c, -\mu_c)$ and $(\mu_C, \lambda_C) = (\lambda_k, -\mu_k)$.  
Therefore $[\mu_K] = [\mu_k]$ and $[\mu_C] = [\mu_c]$ in $H_1(X;\Z)$.  
However, since $[\lambda_C] = -[\mu_k] = -[\mu_K]$, we find that $\lk(K,C)=-1$.    
To orient $K$ and $C$ so that $\lk(K, C) = 1$, 
we must flip the orientation on $C$, say. 
Then for this correctly oriented $C$, 
we have $[\mu_C] = -[\mu_c]$. 
Hence $\Delta_{K \cup C}(x, y) = \Delta_{k \cup c}(x, y^{-1})$.
\end{proof}

\medskip

We recall also the following twisting formula for Alexander polynomials from \cite[Theorem 2.1]{BM}. 

\begin{proposition}[\cite{BM}]
\label{twisting_formula}
Let $k \cup c$ be an oriented two-component link such that $c$ is an unknot and $\omega = \lk(k, c) > 0$.  
Denote by $k_n$ a knot obtained from $k$ by $n$--twist along $c$. 
Then 
$\Delta_{k_n}(t) = \Delta_{k \cup c}(t, t^{n\omega})$. 
\end{proposition}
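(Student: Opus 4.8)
The plan is to reduce this single-variable identity to the Torres conditions by realising $k_n$ as one component of a two-component link that shares its exterior with $k\cup c$. Throughout, $\doteq$ denotes equality up to multiplication by a unit.

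Since $c$ is an unknot, $(-1/n)$--surgery on $c$ returns $S^3$; let $c'$ be the core of the surgery solid torus, given some orientation. Then $k_n\cup c'$ is a two-component oriented link in $S^3$ whose exterior is \emph{literally} the manifold $X:=S^3-\nbhd(k\cup c)$: the only difference from $k\cup c$ is the meridian chosen on the torus $\bdry\nbhd(c)$, namely $\mu_c$ for $k\cup c$ versus the $(-1/n)$--surgery slope $-\mu_c+n\lambda_c$ for $k_n\cup c'$. Because $\lk(k,c)=\omega$, one has $[\lambda_c]=\omega[\mu_k]$ in $H_1(X)$, so there the meridian of $c'$ represents $n\omega[\mu_k]-[\mu_c]$ while the meridian of $k_n$ still represents $[\mu_k]$; thus the two ordered meridional bases of $H_1(X)\cong\Z^2$ differ by the matrix $\mat{1}{0}{n\omega}{-1}$ in $\mathrm{GL}_2(\Z)$. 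Since the multivariable Alexander polynomial of a link depends only on the Alexander module of the exterior and on the identification of $\Z[H_1]$ with a Laurent polynomial ring determined by the meridional basis, this base change corresponds to the monomial substitution
\[
\Delta_{k_n\cup c'}(u,v)\ \doteq\ \Delta_{k\cup c}\bigl(u,\, u^{n\omega}v^{-1}\bigr)
\]
(with $c'$ oriented compatibly; for the other orientation replace $v$ by $v^{-1}$).

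Next I would apply the Torres condition to the two-component link $k_n\cup c'$, specialising to $1$ the variable $v$ of the component $c'$. With $\omega':=\lk(k_n,c')$ this gives
\[
\Delta_{k_n\cup c'}(u,1)\ \doteq\ \frac{u^{\omega'}-1}{u-1}\,\Delta_{k_n}(u).
\]
A short homology computation shows $\omega'=\pm\omega$ (in the exterior of $k_n$ one has $[\mu_c]=n\omega[\mu_{k_n}]$ and $[\lambda_c]=\omega[\mu_{k_n}]$, and $c'$ is carried by $[\lambda_c]$ up to sign), hence $\tfrac{u^{\omega'}-1}{u-1}\doteq\tfrac{u^{\omega}-1}{u-1}$. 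Setting $v=1$ in the displayed substitution and comparing the two expressions for $\Delta_{k_n\cup c'}(u,1)$ yields
\[
\Delta_{k\cup c}\bigl(t,\,t^{n\omega}\bigr)\ \doteq\ \frac{t^{\omega}-1}{t-1}\,\Delta_{k_n}(t),
\]
which is the twisting formula; when $\omega=1$, as in all the links $k\cup c$ arising in this paper, the prefactor equals $1$ and this is exactly $\Delta_{k_n}(t)\doteq\Delta_{k\cup c}(t,t^{n})$.

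The step I expect to need the most care is the change of basis: one must justify that the \emph{symmetrized} multivariable Alexander polynomial transforms by the expected monomial substitution under a $\mathrm{GL}_2(\Z)$ change of meridional basis, and must keep the orientations of $k_n$ and $c'$ consistent --- recall $\Delta_{A\cup B}(x,y)\neq\Delta_{A\cup-B}(x,y)$ in general, though this ambiguity disappears once $v$ is set to $1$. No specialisation/nonvanishing difficulty intervenes, since Torres is invoked \emph{before} any variable is evaluated along a proper subvariety and its right-hand side is plainly non-zero. As an independent check, the same identity drops out of the multiplicativity of Reidemeister-Turaev torsion applied to $S^3-\nbhd(k_n)=X\cup_{\bdry\nbhd(c)}W$ over $\Q(t)$: one has $\tau(X)\doteq\Delta_{k\cup c}(t,t^{n\omega})$ (Milnor, with no $(t-1)$ factor since $k\cup c$ has two components), $\tau(\bdry\nbhd(c))=\tau(T^2)\doteq1$, and $\tau(W)\doteq(t^{\omega}-1)^{-1}$ since the core of the filling torus maps to $t^{\pm\omega}$, while $\tau(S^3-\nbhd(k_n))\doteq\Delta_{k_n}(t)/(t-1)$; combining these gives the displayed relation.
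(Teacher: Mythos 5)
The paper itself gives no argument for this proposition---it is quoted from \cite[Theorem 2.1]{BM}---so there is no in-paper proof to compare against; your route (same exterior for $k_n\cup c'$ and $k\cup c$, a $\mathrm{GL}_2(\Z)$ change of meridional basis acting as a monomial substitution, then the Torres condition, with a torsion cross-check) is the standard one and the individual steps are sound: the substitution $y=u^{n\omega}v^{-1}$, the computation $\lk(k_n,c')=\pm\omega$, and the unit-ambiguity bookkeeping are all correct.

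The problem is the endpoint. What you actually derive is $\Delta_{k_n}(t)\doteq\dfrac{t-1}{t^{\omega}-1}\,\Delta_{k\cup c}(t,t^{n\omega})$, and you then declare this ``is the twisting formula.'' It is not the statement you were asked to prove: the proposition as printed has no prefactor and is asserted for every $\omega>0$. For $\omega\ge 2$ your own computation shows the printed identity cannot hold verbatim with the standard symmetrized multivariable polynomial: at $n=0$ it would read $\Delta_{k}(t)\doteq\Delta_{k\cup c}(t,1)$, whereas Torres gives $\Delta_{k\cup c}(t,1)\doteq\frac{t^{\omega}-1}{t-1}\Delta_k(t)$; concretely, for the $(2,4)$--torus link ($\omega=2$, $\Delta_{k\cup c}\doteq xy+1$) the right side is $t+1$ while $\Delta_k=1$. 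So either the quoted statement must carry the correction factor $(t-1)/(t^{\omega}-1)$ (and you should say so explicitly and reconcile it with the precise statement of \cite[Theorem 2.1]{BM}), or the hypothesis $\omega=1$ must be added, in which case your argument does prove it---and $\omega=1$ is indeed the only case used anywhere in this paper. As written, however, your proof establishes a different (corrected) formula and proves the proposition as stated only when $\omega=1$; that discrepancy needs to be addressed head-on rather than absorbed into the phrase ``which is the twisting formula.''
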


\medskip

Propositions~\ref{prop:dualalexanderpolynomials} and \ref{twisting_formula} lead us some symmetry 
among Alexander polynomials of $k_n$ and $K_n$. 

\begin{corollary}
\label{Alex_poly}
Let $k \cup c$ be a link as in Theorem~\ref{link_surgery} with surgery dual link $C \cup K$ where $K$ is dual to $c$ and $C$ is dual to $k$. 
Then for the twist families of knots $\{ k_n \}$ and $\{ K_n \}$, 
we have $\Delta_{k_n}(t) = \Delta_{K_{-n}}(t)$. 
In particular, $\Delta_{k}(t) = \Delta_{K}(t)$. 
\end{corollary}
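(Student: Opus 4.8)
The plan is to chain together the two results just proved: the duality $\Delta_{K\cup C}(x,y) = \Delta_{k\cup c}(x,y^{-1})$ from Proposition~\ref{prop:dualalexanderpolynomials} and the twisting formula $\Delta_{J_n}(t) = \Delta_{J\cup e}(t,t^{n\omega})$ from Proposition~\ref{twisting_formula}. Since the statement only involves one-variable Alexander polynomials of knots, which are insensitive to orientation, the plan begins by fixing orientations so that all the cited hypotheses apply simultaneously.

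First I would record the orientation set-up. As noted in the proof of Theorem~\ref{link_surgery}, $|\lk(k,c)| = 1$, and by Remark~\ref{dual link} the dual $C$ is also an unknot with $|\lk(K,C)| = 1$. So I orient the four components so that $\lk(k,c) = \lk(K,C) = 1$, which is precisely the normalization under which Proposition~\ref{prop:dualalexanderpolynomials} was stated. Because $\Delta_{k_n}$ and $\Delta_{K_n}$ do not depend on the chosen orientations of $k_n$, $K_n$, this costs nothing.

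Next I would apply Proposition~\ref{twisting_formula} twice, once to each family. Since $c$ is unknotted with $\omega = \lk(k,c) = 1$, twisting $k$ along $c$ gives $\Delta_{k_n}(t) = \Delta_{k\cup c}(t, t^n)$; likewise, since $C$ is unknotted with $\lk(K,C) = 1$, twisting $K$ along $C$ gives $\Delta_{K_n}(t) = \Delta_{K\cup C}(t, t^n)$. Now substituting Proposition~\ref{prop:dualalexanderpolynomials} into the second identity,
\[
\Delta_{K_n}(t) = \Delta_{K\cup C}(t, t^n) = \Delta_{k\cup c}(t, t^{-n}) = \Delta_{k_{-n}}(t),
\]
where the last equality is again Proposition~\ref{twisting_formula}. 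Rearranging gives $\Delta_{k_n}(t) = \Delta_{K_{-n}}(t)$, and setting $n = 0$ (so $k_0 = k$, $K_0 = K$) yields $\Delta_k(t) = \Delta_K(t)$.

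I do not expect a real obstacle; the argument is a bookkeeping chain of the two propositions. The one point demanding care is consistency of conventions: Proposition~\ref{prop:dualalexanderpolynomials} requires flipping an orientation (say on $C$) to achieve $\lk(K,C) = +1$, and Proposition~\ref{twisting_formula} requires $\omega > 0$, so both must be arranged together at the outset — which is why I fix orientations first. One should also confirm that "twisting $n$ times along $c$" in Theorem~\ref{link_surgery} matches the sign convention of Proposition~\ref{twisting_formula}; both are defined via $(-1/n)$--surgery on the unknotted component, so they agree, and the substitution $t^n \mapsto t^{-n}$ above is forced solely by the $y \mapsto y^{-1}$ in the duality relation.
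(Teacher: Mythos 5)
Your proposal is correct and follows essentially the same route as the paper: orient so that $\lk(k,c)=\lk(K,C)=1$, then chain Proposition~\ref{twisting_formula} with Proposition~\ref{prop:dualalexanderpolynomials} to get $\Delta_{k_n}(t)=\Delta_{k\cup c}(t,t^n)=\Delta_{K\cup C}(t,t^{-n})=\Delta_{K_{-n}}(t)$ and set $n=0$. Starting the chain from $\Delta_{K_n}$ and relabeling $n\mapsto -n$ is only a cosmetic difference from the paper's proof.
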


\begin{proof}
We may orient $k$ and $c$ so that $\lk(k,c) = 1$; 
see the proof of Theorem~\ref{link_surgery}. 
Then Propositions~\ref{prop:dualalexanderpolynomials} and \ref{twisting_formula} show that 
$\Delta_{k_n}(t) = \Delta_{k \cup c}(t, t^n) = \Delta_{K \cup C}(t, t^{-n}) = \Delta_{K_{-n}}(t)$. 
In particular, 
putting $n = 0$, we have $\Delta_{k}(t) = \Delta_{K}(t)$. 
\end{proof}

\bigskip

\section{Examples}
\label{examples}
In this section we will provide examples which satisfy the condition in Theorem~\ref{link_surgery}, 
and hence Corollary~\ref{cor:generalsurg}. 
Example~\ref{9_42_P(-3,3,5)} follows from Examples~\ref{ex:K0} and \ref{ex:K1}. 
A slight modification gives a non-hyperbolic example, 
Example~\ref{satellite_composite_example} that demonstrates Theorem~\ref{satellite_composite}. 
We will make a further modification of the first example to present Example~\ref{nointegerchar} which implies Theorem~\ref{all integers}. 

\bigskip
Let us take a two component link $k \cup c$ with $|\lk(k, c)| = 1$ as in Figure~\ref{fig:kc-KC}. 
Then as shown in Figure~\ref{fig:kc-KC}, 
$(0, 0)$--surgery on $k \cup c$ yields $S^3$ and its surgery dual $C \cup K \subset S^3$.  
Thus $k \cup c$ satisfies the condition in Theorem~\ref{link_surgery}, and $K(n) \cong k_n(n)$ does hold 
for all integers $n$. 

Furthermore, orienting $k \cup c$ so that $\lk(k, c)=1$, 
one may calculate\footnote{For a computer assisted calculation,  
one may first use PLink within SnapPy \cite{snappy} to obtain a Dowker-Thistlethwaite code (DT code) for the link.  
Then the Knot Theory package \cite{knottheorypackage} for Mathematica \cite{mathematica} can produce the multivariable Alexander polynomial from the DT code.} 
the multivariable Alexander polynomial of $k \cup c$ to be 

\[
\Delta_{k \cup c}(x,y) =  - (x^{-1}-2+x)y^{-1} +1 - (x^{-1}-2+x) y.
\]

Hence by Proposition~\ref{twisting_formula} we have:
\[
\Delta_{k_n}(t) = \Delta_{k \cup c}(t, t^n) = - (t^{-1} -2 +t) t^{-n} + 1 - (t^{-1} -2 +t)t^n. \tag{$\star$}
\] 

In particular, since the Alexander polynomial of $k_n$ varies depending on $n$, 
$c$ is not a meridian of $k$.

\begin{figure}[h]
\begin{center}
\includegraphics[width=0.9\textwidth]{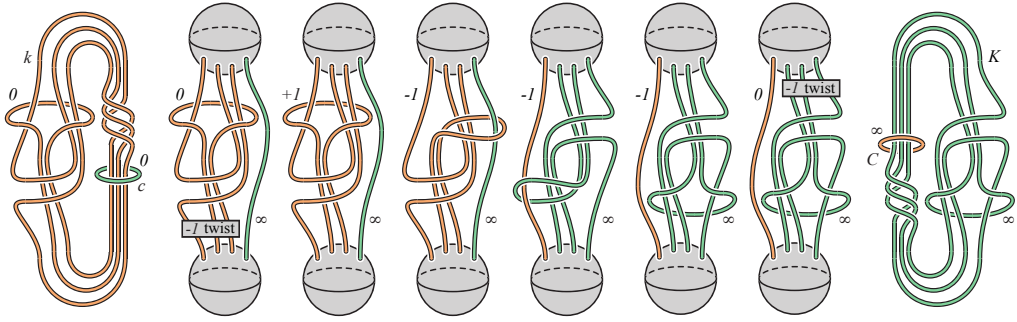}
\caption{$(0, 0)$--surgery on $k \cup c$ results in $S^3$ with its surgery dual $C \cup K$.}
\label{fig:kc-KC}
\end{center}
\end{figure}

Let us generalize this following Corollary~\ref{cor:generalsurg}. 
Let $K_m$ be a knot obtained from $K$ by $m$--twist along $C$.  
Then Corollary~\ref{cor:generalsurg} asserts that  
$K_m(n+m) \cong k_n(m+n)$ for any integers $m, n$.
Figure~\ref{fig:kcfamilies} demonstrates this fact pictorially.

\begin{figure}[h]
\begin{center}
\includegraphics[width=0.9\textwidth]{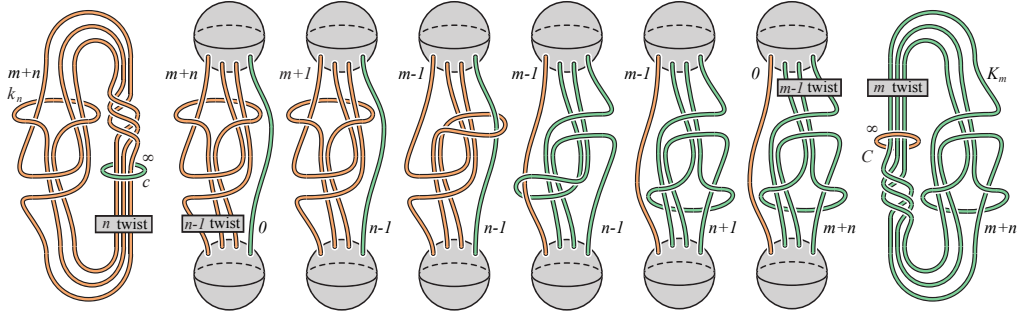}
\caption{$(m+n)$--surgery on the knot $k_n$ is equivalent to $(m+n)$--surgery on $K_m$.}
\label{fig:kcfamilies}
\end{center}
\end{figure}

Let us choose an integer $m$ arbitrarily. 
Observe that, in this example, we have $K_m = k_m$; see Figure~\ref{fig:kcfamilies}. 
Hence, if $k_n = K_m$ for some integer $n$, 
then $k_n = k_m$. 
Thus $\Delta(k_n) \doteq \Delta(k_m)$, 
and $(\star)$ implies that  $n = \pm m$. 
Thus at most $k_m$ and $k_{-m}$ can be isotopic to $K_m$. 
Since $K_m(n+m) \cong k_n(m+n)$ for all integers $m, n$, we have the following:

\begin{itemize}
\item
For a given integer $m$, 
every integral slope except possibly $0$ and $2m$ fails to be a characterizing slope for $K_m$. 
\item If furthermore $K_{-m} \neq K_{m}$, 
then $0$ will fail to be a characterizing slope as well. 
\end{itemize}

\medskip

\begin{example}[$m = 0$]
\label{ex:K0}
Let us choose $m = 0$ in the above. 
Then $K_0(n) = k_n(n)$ for all integers $n$ and, 
as mentioned above, 
every non-zero integral slope fails to be a characterizing slope for $K_0$.  
In Figure~\ref{fig:k-1topretzel} we identify $K_0 = k_0$ as the pretzel knot $P(-5,-3,3)$, 
which is known to be hyperbolic by \cite{oertel}. 
\end{example}

\begin{figure}[h]
\begin{center}
\includegraphics[width=0.9\textwidth]{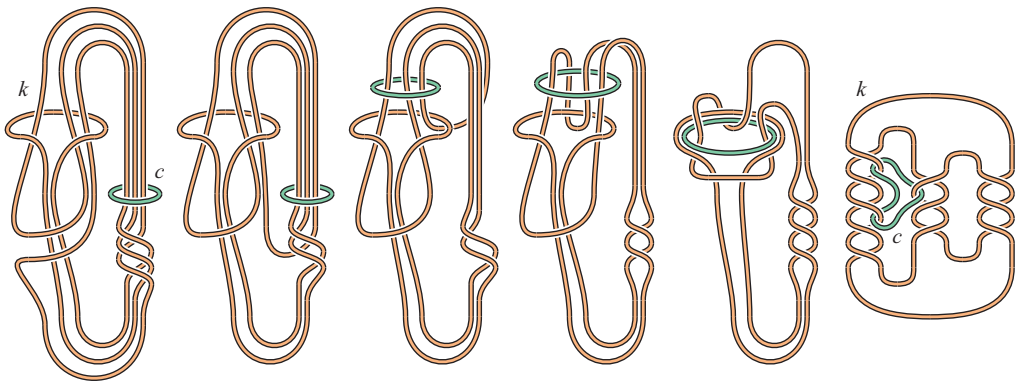}
\caption{The knot $k = k_{0}$ is isotoped into a presentation as the pretzel knot $P(-5,3,-3)$.  
The twisting circle $c$ is carried along with the isotopy.}
\label{fig:k-1topretzel}
\end{center}
\end{figure}

\begin{remark}
\label{rem:seifertsurg}
Notably, the (mirror of the) knot $P(-5,3,-3)$ was the basic example of the first two families non-strongly invertible knots  with a small Seifert fibered space surgery \cite{MMM}.    
Indeed, $(-1)$--surgery on $P(-5,3,-3)$ is the Seifert fibered space $S^2(-2/5, 3/4, -1/3)$.   

Since $P(-5,3,-3)$ is the knot $K_{0}$ 
and $K_0(n) = k_n(n)$ for all integers $n$, 
we have $K_0(-1) = k_{-1}(-1) = K_{-1}(-1)$. 
Thus $(-1)$--surgery on $K_{-1}$ is the same Seifert fibered space. 
SnapPy recognizes the complement of $K_{-1}$ as the mirror of the census manifold  $o9_{34801}$.  
Furthermore, SnapPy reports this manifold as asymmetric, 
implying that $K_{-1}$ is neither strongly invertible nor cyclically periodic, 
and hence cannot be embedded in a genus $2$ Heegaard surface. 
\end{remark}

\medskip

\begin{example}[$m = 1$]
\label{ex:K1}
By choosing $m = 1$ instead of $0$, 
we obtain a knot $K_1$ for which 
we have $K_1(n+1) = k_{n}(1+n)$ for all integers $n$. 
As we mentioned, every integral slope other than $0, 2$ are non-characterizing slope for $K_1$. 
In Figure~\ref{fig:9_42} we recognize the knot $K_{1}$ as the  $9$--crossing Montesinos knot $M(1/3, -1/2, 2/5)$ which is the 
knot $9_{42}$ in Rolfsen's table \cite{rolfsen}.  
Following \cite{oertel} $K_1$ is a hyperbolic knot. 

Now let us show that $0$--slope is also a non-characterizing slope for $K_1$. 
Since $K_1(0) \cong k_{-1}(0)$, 
it is sufficient to see that $K_1 \ne k_{-1}$. 
Recall that $K_m = k_m$ for any $m$. 
Alexander polynomials distinguish $k_1$ from $k_n$ for all $n \neq \pm 1$; see $(\star)$.  
The Jones polynomial\footnote{Kodama's software KNOT \cite{kodamasoft} was used confirm the Jones polynomials of knots.}  will however distinguish $k_1 = K_1$ and $k_{-1}$:
\[V_{k_1}(q) = q^{-3} - q^{-2} + q^{-1} -1 + q - q^2 + q^3\]
while
\[V_{k_{-1}}(q) = q^{-1} + q^{-3} - q^{-6} - q^{-8} + q^{-9} - q^{-10} + q^{-11}.\]
(As noted in Remark~\ref{rem:seifertsurg}, SnapPy also identifies the complement of $K_{-1} = k_{-1}$ as distinct from the complement of $K_1=9_{42}$, thereby distinguishing these knots.)
Hence all integers except possibly $2$ are  non-characterizing slopes for the hyperbolic knot $K_1=9_{42}$. 
\end{example}

\begin{figure}[h]
\begin{center}
\includegraphics[width=0.75\textwidth]{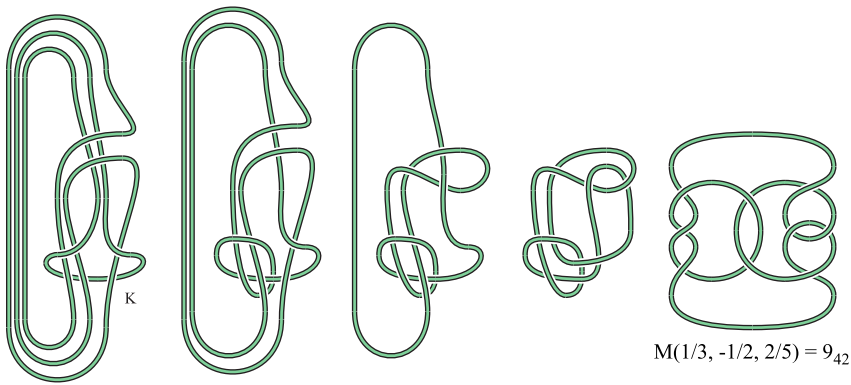}
\caption{The knot $K_1$ in Figure~\ref{fig:kcfamilies} is isotoped into a presentation as the $9$ crossing Montesinos knot $M(1/3, -1/2, 2/5)$ which may be recognized as the knot $9_{42}$ in Rolfsen's table \cite{rolfsen}. }
\label{fig:9_42}
\end{center}
\end{figure}

\medskip

Next we provide examples of non-hyperbolic knots with all integral slopes are non-characterizing slopes, 
from which Theorem~\ref{satellite_composite} follows. 

\begin{example}[Non-hyperbolic example]
\label{satellite_composite_example}
Given any non-trivial knot $k''$,  
let us take a two component link $k \cup c$ as in Figure~\ref{fig:kc_composite}, 
where $k$ is a connected sum of a knot $k'$
(which is $k$ in Figure~\ref{fig:kc-KC}, the closure of the $1$--string tangle $\tau'$) 
and the non-trivial knot $k''$ (the closure of the $1$--string tangle $\tau''$). 

\begin{figure}[h]
\begin{center}
\includegraphics[width=0.2\textwidth]{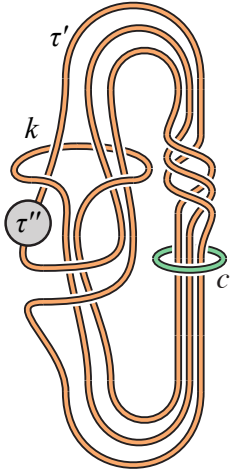}
\caption{The sum of $1$--string tangles $\tau'$ and $\tau''$ is the connected sum $k = k'\, \sharp\, k''$. 
}
\label{fig:kc_composite}
\end{center}
\end{figure}

Then as in Figure~\ref{fig:kc-KC}, we see that $(0, 0)$--surgery on $k \cup c$ gives $S^3$ with the surgery dual $C \cup K$.  
Actually, we follow the isotopy and ``light bulb" moves as indicated in Figure~\ref{fig:kc-KC} to obtain the sixth figure, 
in which $k$ is almost an $S^1$ fiber, 
but it has the connected summand $k''$ (i.e.\ the knotted arc $\tau''$).
Then we apply further ``light bulb" moves to $k$ so that it becomes an $S^1$ fiber; 
$K$ becomes a satellite knot with $k''$ as a companion knot. 
Then by Corollary~\ref{cor:generalsurg}, $K_m(n+m) \cong k_n(m+n)$ for all integers $m, n$. 

It is easy to observe that $k_n$ is a connected sum $k'_n\, \sharp\, k''$, 
where $k'_n$ is a knot obtained from $k'$ by $n$--twist along $c$. 
For instance, $k_{0} = P(-5, 3, -3)\, \sharp\, k''$ and $k_1 = 9_{42}\, \sharp\, k''$.  
Since $k'_n$ is non-trivial for all integers $n$ by $(\star)$, 
$k_n$ is not prime for all integers $n$. 

On the other hand, we show that $K_m$ is prime for all integers $m$. 
(We note that, by construction, $K_m$ has $k''$ as a companion knot for every integer $m$.)
In the following we fix an integer $m$ arbitrarily. 
First we observe that $k_n(m+n)$ is obtained by gluing $E(k'_n)$ and $E(k'')$ along their boundary tori. 
Recall that  the exterior $E(k_n)$ may be expressed as the union of the $2$-fold composing space $X$ 
(i.e. $\mbox{[disk with $2$--holes]} \times S^1$) and two knot spaces $E(k'_n)$, $E(k'')$. 
We note that $\partial X$ consists of $\partial E(k_n), \partial E(k'_n)$ and $\partial E(k'')$ 
and a regular fiber in $\partial X \cap \partial E(k_n)$ is a meridian of $k_n$. 
Since the surgery slope $m+n$ is integral, 
the corresponding Dehn filling of $X$ results in $S^1 \times S^1 \times [0, 1]$ and 
$k_n(m+n)$ can be viewed as the union of $E(k'_n)$ and $E(k'')$. 
Hence $K_m(n+m) \cong k_n(m+n) = E(k'_n) \cup E(k'')$ for all integers $n$. 
It should be noted here that $E(k'')$ is independent of $n$, but the topological type of $E(k'_n)$ depends on $n$. 
Now assume for a contradiction that $K_m$ is not prime and express  
$K_m = t_1\, \sharp\, \cdots \sharp\, t_p$ where $t_i$ is a prime knot for $1 \le i \le p$. 
Then $E(K_m)$ is the union of the $p$--fold composing space $Y = \mbox{[disk with $p$--holes]} \times S^1$ and $p$ knot spaces 
$E(t_1), \dots, E(t_p)$, 
where a regular fiber in $\partial Y \cap \partial E(K_m)$ is a meridian of $K_m$. 
Since the surgery slope $n + m$ is integral, 
the corresponding Dehn filling of $Y$ results in $(p-1)$--fold composing space $Y' = \mbox{[disk with $(p-1)$--holes]} \times S^1$. 
Hence $K_m(n+m)$ is expressed as the union $Y' \cup E(t_1) \cup \cdots \cup E(t_p)$. 
If necessary, decomposing each $E(t_i)$ further by essential tori, 
we obtain a torus decomposition of $K_m(n+m)$ in the sense of Jaco-Shalen-Johannson \cite{JS, Jo}. 
Note that identifications of $Y'$ and $E(t_i)$ ($1 \le i \le p$) depends on $n$, 
but the topological type of $E(t_i)$ ($1 \le i \le p$) does not depend of $n$. 
To make precise, let us focus on the case of $n = 0, 1$. 
Then $K_m(n+m) \cong k_n(m+n) = E(k'_n) \cup E(k'')$ and $E(k'_n)$ admits a hyperbolic structure in its interior: 
$E(k'_0)$ is the exterior of the hyperbolic knot $P(-5, 3, -3)$ and $E(k'_1)$ is the exterior of the hyperbolic knot $9_{42}$. 
If $E(k'')$ is neither hyperbolic nor Seifert fibered, 
we decompose $E(k'')$ by essential tori to obtain a torus decomposition of $K_m(n+m) \cong k_n(m+n)$ 
in the sense of Jaco-Shalen-Johannson. 
Since $E(k'_0) \not\cong E(k'_1)$, 
uniqueness of the torus decomposition of $K_m(n+m)$ shows that some $E(t_i)$ changes according as $n = 0, 1$. 
This is a contradiction. 
It follows that $K_m$ is a prime knot. 
Since $K_m$ is prime, while $k_n$ is not prime for all integers $m, n$,  
we have $\{ K_m \} \cap \{k_n \} = \emptyset$.   
Thus every integral slope fails to be a characterizing slope for a prime satellite knot $K_m$ (with a given knot $k''$ a companion knot) for any integer $m$, 
establishing Theorem~\ref{satellite_composite}(1). 
Similarly, every integral slope fails to be a characterizing slope for a composite knot $k_n$ (with a given knot $k''$ a connected summand) for any integer $n$. 
This establishes Theorem~\ref{satellite_composite}(2).

\end{example}

\begin{example}[Proof of Theorem~\ref{all integers}]
\label{nointegerchar}
Figure~\ref{fig:tangledfamily} shows a sequence of transformations relating $(m+n, \infty)$--surgery on a link $k_n \cup c$ to 
$(\infty, n+m)$--surgery on a link $C \cup K_m$.  
In particular, it gives two twist families of knots $\{k_n\}$ and $\{K_m\}$ such that $k_n(m+n) = K_m(n+m)$.

\begin{figure}[h]
\begin{center}
\includegraphics[width=0.9\textwidth]{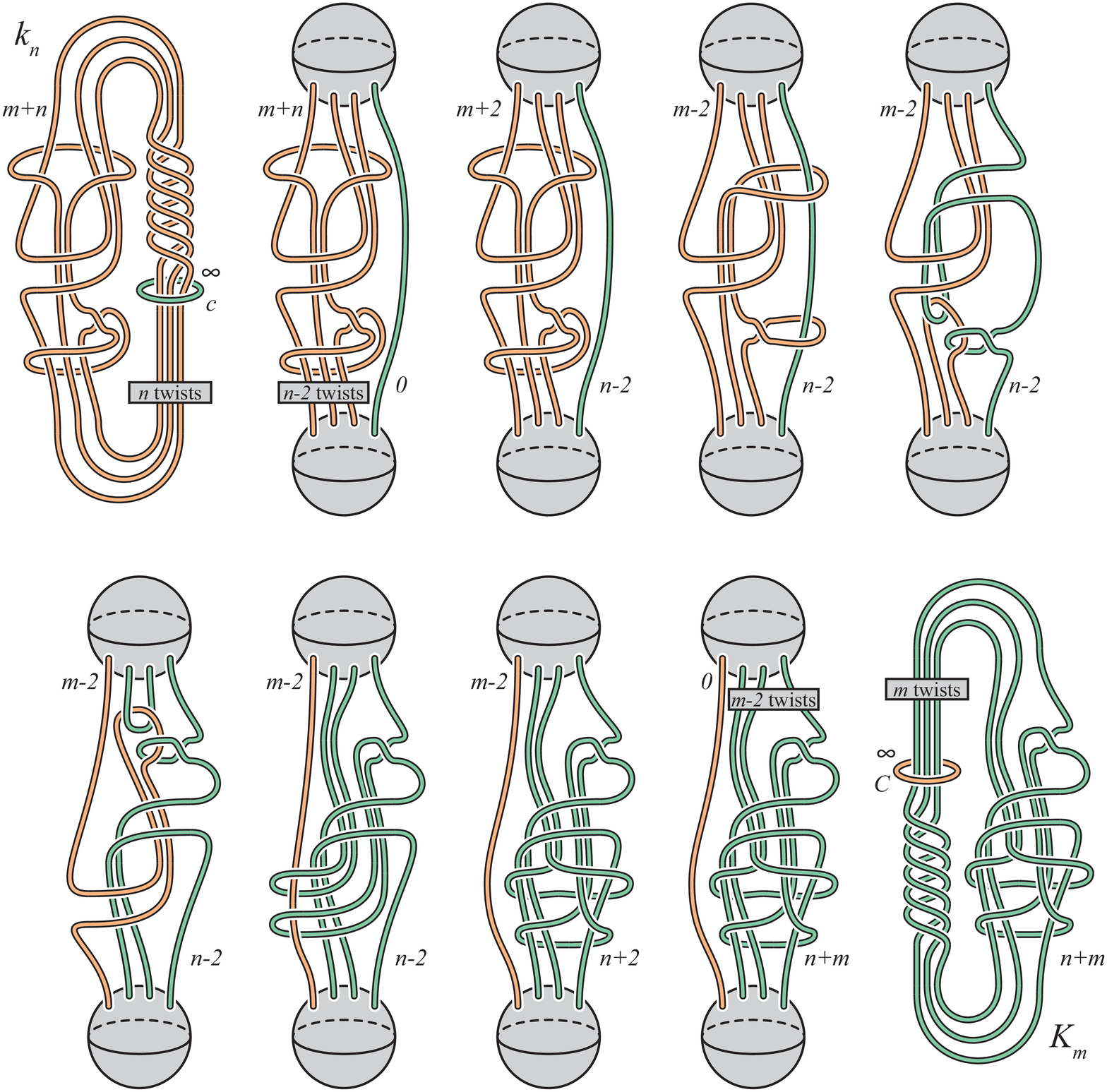}
\caption{Two families of knots  $\{k_n\}$ and $\{K_m\}$  such that $k_n(m+n) = K_m(n+m)$. 
}
\label{fig:tangledfamily}
\end{center}
\end{figure}

Replace $(m+n, \infty)$--surgery on $k_n \cup c$ by $(0, 0)$--surgery on $k_0 \cup c$, 
and follow isotopies and ``light bulb'' moves as indicated in Figure~\ref{fig:tangledfamily} to see that 
$(0, 0)$--surgery on $k_0 \cup c$ yields $S^3$ with surgery dual $C \cup K_0$ where 
$K_0$ is dual to $c$ and $C$ is dual to $k_0$.

As Figure~\ref{fig:8_6} demonstrates, 
the knot $k_1$ is the hyperbolic $8$--crossing Montesinos knot $M(3, 1/3, 1/2)$. 
It is the knot $8_6$ in Rolfsen's table,  
the two-bridge knot $\tfrac{23}{10}$. 
 Following \cite{oertel} (cf. \cite{Menasco, HatThu}) it is a hyperbolic knot.

\begin{figure}[h]
\begin{center}
\includegraphics[width=0.9\textwidth]{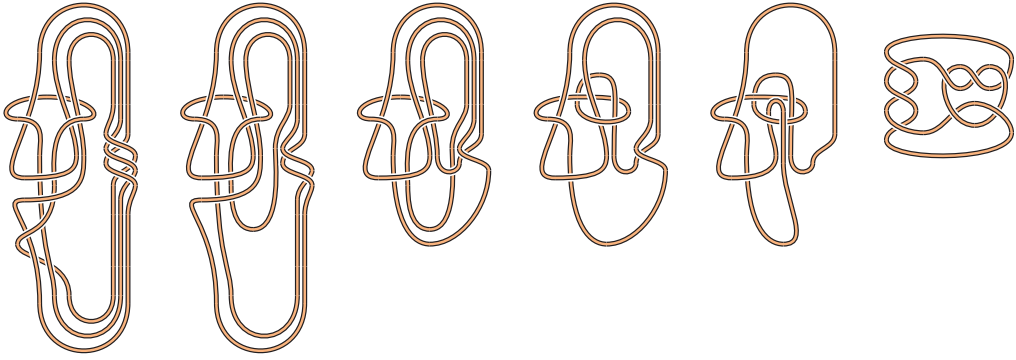}
\caption{The knot $k_1$ of the family depicted in Figure~\ref{fig:tangledfamily} is isotoped into the $8$--crossing Montesinos knot 
$M(3, 1/3, 1/2)$ which is the two-bridge knot $\tfrac{23}{10}$ and also the knot $8_6$ in Rolfsen's table. }
\label{fig:8_6}
\end{center}
\end{figure}

Using $n=0$, 
we may calculate that 
\[ 
\Delta_{k_0 \cup c}(x, y) = (x^{-1}-2+x)y^{-1} +(x^{-2} - 4x^{-1} +5 - 4x +x^2) + (x^{-1}-2+x)y, \tag{$\star\star$}
\]

which is equal to $\Delta_{K_0 \cup C}(x, y^{-1})$ by Proposition~\ref{prop:dualalexanderpolynomials}.   
Note also that $\Delta_{k_0 \cup c}(x, y) = \Delta_{k_0 \cup c}(x, y^{-1})$; see $(\star\star$).
Hence $\Delta_{k_0 \cup c}(t, t^n) = \Delta_{k_0 \cup c}(t, t^{-n}) = \Delta_{K_0 \cup C}(t, t^{n})$, 
and it follows from Proposition~\ref{twisting_formula} that 
\[\Delta_{k_n}(t) = \Delta_{K_n}(t) = (t^{-1}-2+t)t^{-n} + (t^{-2}-4t^{-1}+5-4t+t^2) + (t^{-1}-2+t)t^{n}.\] 

Thus Alexander polynomials distinguish $k_1$ from $K_m$ for all integers $m \neq \pm1$.

We further calculate the Jones polynomials of $k_1$, $K_1$, and $K_{-1}$ to be

\[V_{k_1}(q) = \frac{1}{q^7} - \frac{2}{q^6 }+ \frac{3}{q^5}- \frac{4}{q^4}+ \frac{4}{q^3} - \frac{4}{q^2}+ \frac{3}{q} - 1+ q,\]
\[V_{K_1}(q) = -\frac{1}{q^{15}}+\frac{1}{q^{14}}+\frac{1}{q^{11}}-\frac{1}{q^8}+\frac{1}{q^7}-\frac{3}{q^6}+\frac{3}{q^5}-\frac{4}{q^4}+\frac{5}{q^3}-\frac{4}{q^2}+\frac{3}{q},\]
\noindent
and 
\[V_{K_{-1}}(q) = -\frac{1}{q^{21}}+\frac{1}{q^{20}} + \frac{1}{q^{17}}- \frac{1}{q^{14}}+ \frac{1}{q^{13}} - \frac{2}{q^{12}} 
+ \frac{1}{q^{11}} - \frac{1}{q^{10}} + \frac{1}{q^9} - \frac{1}{q^8} + \frac{1}{q^7} - \frac{1}{q^6} + \frac{2}{q^5} - \frac{3}{q^4} 
+ \frac{4}{q^3} - \frac{3}{q^2} + \frac{2}{q}\]
\noindent
to conclude that $k_1 \neq K_{\pm1}$. 
Thus $k_1$ is an $8$--crossing hyperbolic knot for which every integral slope is not a characterizing slope. 
\end{example}

\bigskip

\section{Further discussions}
\label{discussion}
Let $k \cup c$ be a two-component link such that $c$ is unknotted and 
$(0, 0)$--surgery on $k \cup c$ yields $S^3$ with  surgery dual link $C \cup K$. 
Denote by $k_n$ a knot obtained from $k$ by $n$--twist along $c$, 
similarly denote by $K_m$ a knot obtained from $K$ by $m$--twist along $C$.  
Thus we obtain twist families of knots $\{ k_n \}$ and $\{ K_n \}$, 
which enjoy $K_m(n+m) \cong k_n(m+n)$ for all integers $m, n$. 
See Corollary~\ref{cor:generalsurg}.

Since $(0, 0)$--surgery on $k \cup c$ results in $S^3$, 
the linking number between $k$ and $c$ must be $\pm 1$. 
Now assume that $ k = k_0$ is an L-space knot. 
Then $k_0(m)$ is an L-space for infinitely many integers $n$ \cite[Proposition 2.1]{OSlens}, 
and since $k_0(m) = K_m(m)$ for all integers $m$, 
the twist family $\{ (K_m, m) \}$ contains infinitely many L-space surgeries. 
Furthermore, it follows from \cite[Proposition~1.10]{BM} that 
$K_m$ have the same Alexander polynomial for all $m \in \mathbb{Z}$.

Based on \cite[Conjecture~1.9]{BM}, we expect a negative answer to the following question.

\medskip

\begin{question}
\label{(0,0)-surgery_L-space knot}
Does there exist a link $k \cup c$ of an L-space knot $k$ and unknot $c$ such that $c$ is not a meridian of $k$ and $(0,0)$--surgery on $k \cup c$ is $S^3$?
\end{question}

\medskip

Recall that the non-zero coefficients of the Alexander polynomial of an L-space knot are $\pm 1$ and alternate in sign \cite[Corollary~1.3]{OSlens}. 
Hence it turns out that our knots with infinitely many non-characterizing slopes given in Section~\ref{examples} are not L-space knots.

So we may expect a positive answer to the following: 
\medskip

\begin{question}
\label{NiZhang_L-space knot}
Does an L-space knot have only finitely many non-characterizing slopes? 
\end{question}

If the answer to Question~\ref{NiZhang_L-space knot} is positive, 
then $k_0$ has only finitely many non-characterizing slopes. 
Since $k_0(m) = K_m(m)$ for all integers $m$, 
$K_m$ must be isotopic to $k_0$ except for at most finitely many integers $m$. 
Then it follows from \cite{KMS} that $c$ is a meridian of $k = k_0$. 
Thus the positive answer to Question~\ref{NiZhang_L-space knot} enables us to answer Question~\ref{(0,0)-surgery_L-space knot} in the negative. 

\medskip

More generally, we ask:

\begin{question}
For which knots $k$ does there exist a link $k \cup c$ of $k$ and an unknot $c$ such that $c$ is not a meridian of $k$ and $(0,0)$--surgery on $k \cup c$ is $S^3$?
\end{question}

Our technique cannot work for non-integral slopes. 
So we would like to propose a modified version of Ni-Zhang's question: 

\begin{question}
\label{NiZhang_modified}
For a hyperbolic knot $K$, is a non-integral slope $r = p/q$ with $|p| + |q|$ sufficiently large a characterizing slope? 
\end{question}

It should be noted here that 
Lackenby \cite{Lack} shows that for each atoroidal, homotopically trivial knot $K$ in a $3$--manifold $Y$ with 
$H_1(Y;\Q) \neq \{ 0 \}$, 
there exists a number $C(Y, K)$ such that $p/q$ is a characterizing slope for $K$ if $|q| > C(Y, K)$. 

\medskip

It is also reasonable to ask: 

\begin{question}
\label{existence}
Does every knot $K$ have a characterizing slope? 
More strongly, does every knot have infinitely many characterizing slopes? 
\end{question}

\bigskip

\bibliographystyle{plain}
\bibliography{BakerMotegi-slope}

\end{document}